\documentclass[12pt]{article}
\oddsidemargin 0in \evensidemargin 0in \textheight 9in \textwidth
6.5in \topmargin 0in \headheight 0in
\parindent 20 pt
\headsep 0 in

\usepackage{amsthm,amssymb}

\newtheorem{theorem}{Theorem}
\newtheorem{corollary}[theorem]{Corollary}

\newtheorem{lemma}[theorem]{Lemma}


\raggedbottom

\def\irr#1{{\rm  Irr}(#1)}
\def\ibr#1{{\rm IBr} (#1)}

\def\cent#1#2{{\bf C}_{#1}(#2)}

\def\norm#1#2{{\rm N}_{#1} (#2)}

\def\B#1#2{{\rm B}_{#1} (#2)}
\def\Bpi#1{\B {\pi}{#1}}

\def\I#1#2{{\rm I}_{#1} (#2)}
\def\Ipi#1{\I {\pi}{#1}}
\def\phi{\varphi}

\newcommand \IIpi[3] {{\rm I}_{#1} (#2 \mid #3)}

\begin{document}

\title{Inducing $\pi$-partial characters with a given vertex}

\author {
       Mark L.\ Lewis
    \\ {\it Department of Mathematical Sciences, Kent State University}
    \\ {\it Kent, Ohio 44242}
    \\ E-mail: lewis@math.kent.edu
       }

\maketitle

\begin{abstract}
Let $G$ be a solvable group.  Let $p$ be a prime and let $Q$ be a
$p$-subgroup of a subgroup $V$.  Suppose $\phi \in \ibr G$.  If
either $|G|$ is odd or $p = 2$, we prove that the number of Brauer
characters of $H$ inducing $\phi$ with vertex $Q$ is at most $|\norm
GQ: \norm VQ|$.

MSC Primary: 20C20, MSC Secondary: 20C15

Keywords: Brauer characters, partial characters, vertices
\end{abstract}



\section{Introduction}

Throughout this note, $G$ is a finite group, and $\irr G$ is the set
of irreducible characters of $G$.  Suppose $\chi \in \irr G$ and $H$
is a subgroup of $G$.  It is easy to obtain an upper bound on the
number of characters in $\irr H$ that induce $\chi$.  Let $\phi_1,
\dots, \phi_n \in \irr H$ be the characters so that $\phi_i^G =
\chi$.  Evaluating at $1$, we obtain $\phi_i (1) = \chi (1)/|G:H|$
for each $i$.  By Frobenius reciprocity (Lemma 5.2 of \cite{text}),
each $\phi_i$ is a constituent of $\chi_H$ with multiplicity $1$.
Since there are $n$ such characters occurring as constituents of
$\chi_H$, it follows that $n (\chi (1)/|G:H|) \le \chi (1)$.  We
deduce that $n \le |G:H|$, and we have an upper bound.  If $H$ is
normal in $G$, this bound is obtained, and it is not particularly
difficult to find nonnormal subgroups where this bound is obtained.

We now turn our attention to Brauer characters.  Fix a prime $p$. We
will write $\ibr G$ for the irreducible $p$-Brauer characters of
$G$.  If $\phi \in \ibr G$, then it is easy to adapt the above proof
to show that $\phi$ is induced by at most $|G:H|$ Brauer characters
of $H$.  However, associated with $\phi$ are certain $p$-subgroups
of $G$ called the vertex subgroups.  When $G$ is a $p$-solvable
group, a $p$-subgroup $Q$ of $G$ is defined to be a vertex for
$\phi$ if there is a subgroup $U$ of $G$ so that $\phi$ is induced
by a Brauer character of $U$ with $p'$-degree and $Q$ is a Sylow
subgroup of $U$.  It is known that all the vertex subgroups of
$\phi$ are conjugate in $G$. If $\phi$ is induced from $\tau \in
\ibr H$, it is easy to see that a vertex for $\tau$ is a vertex for
$\phi$.  Thus, $H$ contains some vertex $Q$ for $\phi$.  Now,
different Brauer characters of $H$ that induce $\phi$ may have
vertex subgroups that are not conjugate in $H$ but are necessarily
conjugate in $G$. Hence, one can ask the following question: Suppose
$\phi \in \ibr G$ has vertex $Q$, and $Q \le H$, how many characters
in $\ibr H$ with vertex $Q$ induce $\phi$?  When either $|G|$ is odd
or $G$ is solvable and $p = 2$, we can obtain an upper bound for
this question.

\begin{theorem} \label{main}
Let $G$ be a solvable group and $p$ a prime.  Assume either $|G|$ is
odd or $p = 2$. Let $Q$ be a $p$-subgroup of $H$. If $\phi \in \ibr
G$, then the number of Brauer characters of $H$ with vertex $Q$ that
induce $\phi$ is at most $|\norm GQ:\norm HQ|$.
\end{theorem}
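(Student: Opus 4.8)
The plan is to convert the counting problem into an orbit-counting problem for the action of $\norm GQ$ on source data attached to $\phi$. The elementary counting fact I will exploit is this: if a group $A$ acts transitively on a set $\Omega$ and $B \le A$, then the number of $B$-orbits on $\Omega$ is at most $|A:B|$. Indeed, identifying $\Omega$ with $A/S$ for a point stabilizer $S$, the $B$-orbits biject with the $(B,S)$-double cosets of $A$, and since each such double coset is a union of right cosets of $B$, of which there are exactly $|A:B|$, the claim follows. Applying this with $A = \norm GQ$, $B = \norm HQ$, and $\Omega$ an $\norm GQ$-orbit of sources will produce precisely the bound $|\norm GQ:\norm HQ|$, so the whole argument is steered by the shape of the target quantity.

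To feed this machine I first attach to each relevant character a canonical piece of vertex data. Writing $\mathcal T$ for the set of $\tau \in \ibr H$ that have vertex $Q$ and satisfy $\tau^G = \phi$, I recall that the hypothesis ``$|G|$ odd or $p = 2$'' is exactly what the underlying $\pi$-partial (and $B_\pi$) theory requires in order to equip a partial character, at a fixed vertex, with a well-defined \emph{source} $\delta$, namely a canonical character associated to $Q$ and determined up to conjugacy by the relevant normalizer. Thus each $\tau \in \mathcal T$ determines a single $\norm HQ$-orbit of sources $\delta_\tau$, and since $\tau^G = \phi$, each such $\delta_\tau$ is also a source of $\phi$ at $Q$. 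Because the sources of the fixed character $\phi$ at the fixed vertex $Q$ form a single $\norm GQ$-orbit $\Omega$, I obtain a well-defined map from $\mathcal T$ to the set of $\norm HQ$-orbits on $\Omega$, sending $\tau$ to $\delta_\tau^{\norm HQ}$. The counting lemma bounds the size of the target by $|\norm GQ:\norm HQ|$, so everything reduces to proving that this map is injective.

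Injectivity is the heart of the matter, and it is where I expect the real difficulty to lie: I must show that $\tau \in \mathcal T$ is \emph{determined} by $\phi$ together with the $\norm HQ$-orbit of its source, i.e.\ that $\tau,\tau' \in \mathcal T$ with $\norm HQ$-conjugate sources must coincide. I would establish this through a Green-correspondence-type rigidity for $\pi$-partial characters: the passage $\tau \mapsto \phi$ should be realized as a composite of canonical vertex correspondences relating the members of $\ibr H$ with vertex $Q$ to $B_\pi$-data on $\norm HQ$, and likewise for $G$, in a manner compatible with induction. Under the standing hypothesis these correspondences are genuine bijections and respect sources, so a common source together with a common induced character $\phi$ forces $\tau$ and $\tau'$ to share a correspondent over $\norm HQ$ and hence to be equal. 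The technical obstacle is precisely to set up this compatibility, controlling how induction from $H$ to $G$ interacts with the fixed vertex $Q$ and its normalizer; I would attack it by induction on $|G|$ with a Clifford-theoretic reduction along a minimal normal subgroup, the case $H = G$ being immediate since there $\mathcal T \subseteq \{\phi\}$ while $|\norm GQ:\norm HQ| = 1$.
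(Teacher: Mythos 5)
Your reduction to orbit counting is fine as far as it goes---the double-coset lemma is correct---but the proof outsources its entire content to the step you yourself flag as ``the heart of the matter,'' and that step is never carried out. You do not construct, or cite, a theory of sources for $\pi$-partial characters with the three properties your argument needs: (i) a canonical source $\delta_\tau$ attached to each $\tau \in \ibr H$ with vertex $Q$, well defined up to $\norm HQ$-conjugacy; (ii) the claim that the sources of $\phi$ at $Q$ form a single $\norm GQ$-orbit; and (iii) the rigidity (``injectivity'') claim that $\tau$ is determined by $\phi$ together with the $\norm HQ$-orbit of $\delta_\tau$. Item (iii) is not a technical compatibility to be arranged later; it is at least as strong as the theorem being proved, so deferring it to an unproven ``Green-correspondence-type'' mechanism leaves a genuine gap rather than a routine verification.

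Worse, (iii) is simply false for any notion of source realized as data attached to $Q$. Take $p = 2$, $G = S_3$, $H = A_3$, and $Q = 1$, which satisfies the hypotheses of the theorem. The two nontrivial linear Brauer characters $\tau \ne \bar\tau$ of $H$ both have vertex $Q = 1$ and both induce the unique degree-two character $\phi \in \ibr G$. Any source datum supported on $Q = 1$ is trivial, so $\tau$ and $\bar\tau$ necessarily lie in the same fiber of your map: injectivity fails, and your argument would return the bound $1$ where the true count is $2 = |\norm GQ : \norm HQ| = |G:H|$. The same collapse occurs whenever $H$ is normal in $G$ and the source is $\norm GQ$-invariant: all $|\norm GQ:\norm HQ|$ conjugates $\tau^n$ with $n \in \norm GQ$ lie in your set $\mathcal{T}$ and share one source orbit. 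In such cases the bound comes not from rigidity but from a stabilizer count, and that is how the paper actually proceeds: Lemma \ref{cliff} and Corollary \ref{cliff count} parametrize the Clifford correspondents with vertex $Q$ by cosets of $\norm {G_\beta}Q$ in $\norm GQ$, and the real work is a minimal-counterexample analysis (Theorem \ref{min counter}) using Isaacs' ${\rm B}_\pi$-theory and the subnormal nucleus, Laradji's lemma, and Glauberman's lemma, reducing to a chief factor $K/N$ over which the invariant constituent $\alpha$ of $\phi_N$ is fully ramified. Note also that the hypothesis ``$|G|$ odd or $p = 2$'' does not play the role you assign it (making source data well defined); it enters only at that final step, where Isaacs' theorem on fully ramified sections needs $|K:N|$ odd to rule out irreducible induction from $V$, forcing any counterexample to have $|G:V|$ a power of $2$ with $2 \in \pi$.
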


At this time, we are not able to determine whether or not this
theorem is true if we loosen the hypothesis that either $|G|$ is odd
or $p = 2$.  In other words, is the conclusion still true if $G$ is
a solvable group of even order and $p$ is an odd prime. This result
was motivated by our work with J. P. Cossey. If we could prove the
conclusion of Theorem \ref{main} when $p$ is odd, then we would be
able to prove J. P. Cossey's conjecture that the number of lifts of
a Brauer character is bounded by the index of a vertex subgroup in
the vertex subgroup when $p$ is odd. Our argument can be found in
the preprint \cite{preprint}.

We would like to thank J. P. Cossey and I. M. Isaacs for several
helpful discussions while we were preparing this note.

\section{Results}

We will in the more general setting of irreducible $\pi$-partial
characters of a $\pi$-separable group $G$.  We here briefly mention
that if $\pi$ is a set of primes and $G$ is a $\pi$-separable group,
one can define (see \cite{pipart} for more details) a set of class
functions $\Ipi G$ from the set $G^o$ (which consists of the
elements of $G$ whose order is divisible by only the primes in
$\pi$) to ${\bf C}$ that plays the role of $\ibr G$, and in fact
$\Ipi G = \ibr G$ if $\pi = \{ p' \}$, the complement of the prime
$p$.

%

We start by considering vertices in Clifford correspondence (see
Proposition 3.2 of \cite{Fong}).  Let $G$ be a $\pi$-separable
group. Let $N$ be a normal subgroup of $G$.   Fix $\phi \in \Ipi G$.
If $\alpha \in \Ipi N$ is a constituent of $\phi_N$, then we write
$G_\alpha$ for the stabilizer of $\alpha$ in $G$, and we write
$\phi_\alpha$ for the Clifford correspondent of $\phi$ with respect
to $\alpha$. In particular, the vertices of the Clifford
correspondent form an orbit under the action of the normalizer of a
particular vertex.

\begin{lemma}\label{cliff}
Let $G$ be a $\pi$-separable group.  Let $N$ be a normal subgroup of
$G$.  Suppose that $\alpha \in \Ipi N$.  Let $\phi \in \Ipi G$ and
$\hat\phi \in \Ipi {G_\alpha}$ so that $\hat\phi^G = \phi$.  Suppose
that $Q$ is a vertex for $\hat\phi$.  Then $Q$ is a vertex is
$\hat\phi^g$ if and only if there exists $n \in \norm GQ$ so that
$G_\alpha g = G_\alpha n$.
\end{lemma}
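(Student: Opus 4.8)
The plan is to reduce the statement to two structural facts about vertices and then translate everything into coset language. The first fact is the conjugacy of vertices recalled in the introduction: the vertices of an irreducible $\pi$-partial character of a $\pi$-separable group are all conjugate in that group. The second is the equivariance of vertices under $G$-conjugation, which I would justify from the fact that induction commutes with conjugation and that conjugation carries Sylow subgroups to Sylow subgroups; thus $R$ is a vertex for $\hat\phi$ if and only if $R^g$ is a vertex for $\hat\phi^g$. Before using these, I would record two preliminary observations. Since $\hat\phi$ is a class function on $G_\alpha$, we have $\hat\phi^x = \hat\phi$ for every $x \in G_\alpha$, so $\hat\phi^g$ depends only on the coset $G_\alpha g$. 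Moreover $\hat\phi^g \in \Ipi{(G_\alpha)^g}$, and because $\phi^g = \phi$ we get $(\hat\phi^g)^G = (\hat\phi^G)^g = \phi$, so $\hat\phi^g$ is again a Clifford correspondent of $\phi$ and it makes sense to ask for its vertices.

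From the equivariance fact, conjugation by $g$ is a bijection from the set of vertices of $\hat\phi$ onto the set of vertices of $\hat\phi^g$. Combining this with the conjugacy of vertices inside $G_\alpha$ (so that the vertices of $\hat\phi$ are exactly $\{Q^y : y \in G_\alpha\}$, as $Q$ is one of them), I obtain the clean description that the vertices of $\hat\phi^g$ are exactly $\{Q^{yg} : y \in G_\alpha\}$. This is the key intermediate statement, and the rest of the argument is a direct reading of this set.

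With this description the equivalence follows. For the ``if'' direction, suppose $G_\alpha g = G_\alpha n$ with $n \in \norm GQ$. Then $\hat\phi^g = \hat\phi^n$ by the coset observation, and since $Q$ is a vertex for $\hat\phi$, equivariance shows $Q^n$ is a vertex for $\hat\phi^n = \hat\phi^g$; as $n \in \norm GQ$ gives $Q^n = Q$, we conclude $Q$ is a vertex for $\hat\phi^g$. For the ``only if'' direction, suppose $Q$ is a vertex for $\hat\phi^g$. Equivariance also makes $Q^g$ a vertex for $\hat\phi^g$, so $Q$ and $Q^g$ are both vertices of $\hat\phi^g \in \Ipi{(G_\alpha)^g}$. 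Applying the conjugacy of vertices inside $(G_\alpha)^g$ yields some $w = g^{-1}xg$ with $x \in G_\alpha$ and $Q = (Q^g)^w = Q^{xg}$. Hence $n := xg \in \norm GQ$, while $G_\alpha n = G_\alpha g$ because $x \in G_\alpha$, which is exactly what is required.

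The main obstacle is the genuinely nontrivial input used in the ``only if'' direction, namely the conjugacy of the vertices of $\hat\phi^g$ within $(G_\alpha)^g$; once that is invoked correctly, the remainder is bookkeeping, and the only real care needed is to keep the conjugation conventions consistent throughout so that the passage between ``$Q$ is a vertex'' and membership of an element of $G_\alpha g$ in $\norm GQ$ is airtight.
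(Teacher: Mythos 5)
Your proof is correct and follows essentially the same route as the paper: the ``if'' direction is identical (write $g = tn$, use that $\hat\phi$ is fixed by $G_\alpha$, and that $n$ normalizes $Q$), and your ``only if'' direction applies conjugacy of vertices to $Q$ and $Q^g$ inside $(G_\alpha)^g$, which is just the paper's argument pushed forward by $g$ instead of pulling $Q$ back to $Q^{g^{-1}}$ and conjugating inside $G_\alpha$ --- both yield $Q = Q^{xg}$ with $x \in G_\alpha$ and hence $n = xg \in \norm GQ$ with $G_\alpha n = G_\alpha g$.
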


\begin{proof}
We first suppose that there exists $n \in \norm GQ$ so that
$G_\alpha g = G_\alpha n$.  Thus, $g = tn$ for some $t \in
G_\alpha$.  We see that $\hat\phi^g = \hat\phi^{tn} = \hat\phi^n$.
We see that $Q = Q^n$ is a vertex for $\hat\phi^n = \hat\phi^g$.

Conversely, suppose that $Q$ is a vertex for $\hat\phi^g$.  Then
$Q^{g^{-1}}$ is a vertex for $\hat\phi$.  Since $Q$ is also a vertex
for $\hat\phi$, we have $Q^{g^{-1}} = Q^t$ for some $t \in
G_\alpha$.  It follows that $Q = Q^{tg}$, and so, $tg \in \norm GQ$.
This implies that $tg = n$ for some $n \in \norm GQ$.  This implies
that $n \in G_\alpha g$, and we conclude that $G_\alpha n = G_\alpha
g$.
\end{proof}

We continue to work in the context of the Clifford correspondence.
In this case, we can get an exact count of the number of partial
characters in $N$ whose Clifford correspondent has vertex $Q$.

\begin{corollary}\label{cliff count}
Let $G$ be a $\pi$-separable group.  Let $N$ be a normal subgroup of
$G$, let $\phi \in \Ipi G$ have vertex $Q$, and suppose that $\beta$
is an irreducible constituent of $\phi_N$ so that $\phi_\beta$ has
vertex $Q$. Then $|\{ \alpha \in \Ipi N \mid \phi_\alpha {\rm ~
has~vertex~} Q \}| = |\norm GQ : \norm {G_\beta}Q|$.
\end{corollary}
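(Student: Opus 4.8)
The plan is to parametrize the partial characters $\alpha$ occurring in the set by right cosets of $G_\beta$ and then to read off the count directly from Lemma \ref{cliff}. First I would recall the standard Clifford theory of $\pi$-partial characters: for $N \triangleleft G$ the irreducible constituents of $\phi_N$ form a single $G$-orbit, so they are exactly the conjugates $\beta^g$ with $g \in G$, and $\beta^g = \beta^{g'}$ precisely when $G_\beta g = G_\beta g'$. Thus $g \mapsto \beta^g$ induces a bijection between the right cosets of $G_\beta$ in $G$ and the constituents of $\phi_N$. Since $\phi_\alpha$ is only defined when $\alpha$ is such a constituent, the set whose size we want is supported on these $\beta^g$.

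Next I would track how the Clifford correspondent behaves under conjugation. Writing $\hat\phi = \phi_\beta \in \Ipi{G_\beta}$, so that $\hat\phi^G = \phi$, conjugation by $g$ produces $\hat\phi^g \in \Ipi{G_{\beta^g}}$, which lies over $\beta^g$ and satisfies $(\hat\phi^g)^G = \phi^g = \phi$ (as $g \in G$ fixes $\phi$). By uniqueness of the Clifford correspondent this forces $\phi_{\beta^g} = \hat\phi^g$. Hence the partial character attached to the constituent $\alpha = \beta^g$ is exactly $\hat\phi^g$.

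Now I would invoke Lemma \ref{cliff}, taking $\beta$ in the role of $\alpha$ there and $\hat\phi = \phi_\beta$, which has vertex $Q$ by hypothesis. The lemma says that $Q$ is a vertex for $\hat\phi^g$ if and only if $G_\beta g = G_\beta n$ for some $n \in \norm GQ$. Combining this with the two previous steps, the set $\{\alpha \in \Ipi N \mid \phi_\alpha \text{ has vertex } Q\}$ corresponds bijectively to those right cosets of $G_\beta$ that meet $\norm GQ$, i.e.\ to the image of $\norm GQ$ under the projection $G \to G_\beta \backslash G$.

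Finally I would count that image. For $n, n' \in \norm GQ$ one has $G_\beta n = G_\beta n'$ exactly when $n' n^{-1} \in G_\beta \cap \norm GQ$, and clearly $G_\beta \cap \norm GQ = \norm{G_\beta}Q$; therefore the number of distinct such cosets is $|\norm GQ : \norm{G_\beta}Q|$, which is the asserted formula. I do not expect a serious obstacle here: once Lemma \ref{cliff} is in hand the entire argument is a translation into an elementary coset count, and the only points requiring a moment's care are the conjugation-equivariance $\phi_{\beta^g} = \hat\phi^g$ of the Clifford correspondent and the identification $G_\beta \cap \norm GQ = \norm{G_\beta}Q$.
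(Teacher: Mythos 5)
Your proof is correct and follows essentially the same route as the paper: both invoke Lemma \ref{cliff} to characterize which conjugates $\beta^g$ yield a Clifford correspondent with vertex $Q$, and then count the resulting cosets via the identification $G_\beta \cap \norm GQ = \norm {G_\beta}Q$. The only difference is that you spell out two steps the paper leaves implicit, namely the orbit parametrization $g \mapsto \beta^g$ and the conjugation-equivariance $\phi_{\beta^g} = (\phi_\beta)^g$ of the Clifford correspondent.
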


\begin{proof}
By Lemma \ref{cliff}, we see that $\phi_\alpha$ has $Q$ as a vertex
if and only if $\alpha = \beta^g$ where $g \in G$ and $g \in G_\beta
n$ for some $n \in \norm GQ$. Finally, we observe that $G_\beta n_1
= G_\beta n_2$ if and only if $\norm {G_\beta}Q n_1 = \norm
{G_\beta}Q n_2$ for $n_1, n_2 \in \norm GQ$.  We have $|\{ \alpha
\in \Ipi N \mid \phi_\alpha {\rm ~ has~vertex~} Q \}| = | \{ G_\beta
n \mid n \in \norm GQ \} | = |\norm GQ :\norm {G_\beta}Q|$.
\end{proof}

We now look at the conditions of a minimal counterexample.  For this
we need to review and develop more notation.  We make use of the
canonical set of $\pi$-lifts, $\Bpi G$, that was defined in
\cite{pisep} by Isaacs.  In other words, $\Bpi G \subseteq \irr G$
and the map $\chi \mapsto \chi^o$ is a bijection from $\Bpi G$ to
$\Ipi G$.  Closely related to this set is the subnormal nucleus
which also was defined in \cite{pisep}.  To define the subnormal
nucleus, we need the $\pi$-special characters.  Let $G$ be a
$\pi$-separable group.  A character $\chi \in \irr G$ is
$\pi$-special if $\chi (1)$ is a $\pi$-number and for every
subnormal group $M$ of $G$, the irreducible constituents of $\chi_M$
have determinants that have $\pi$-order.  Many of the basic results
of $\pi$-special characters can be found in Section 40 of
\cite{hupte} and Chapter VI of \cite{Mawo}. One result that is
proved is that if $\alpha$ is $\pi$-special and $\beta$ is
$\pi'$-special, then $\alpha \beta$ is necessarily irreducible. We
say that $\chi$ is {\bf factored} if $\chi = \alpha \beta$ where
$\alpha$ is $\pi$-special and $\beta$ is $\pi'$-special.  We also
note that if $\chi \in \Bpi G$ and $N$ is normal in $G$, then the
irreducible constituents of $\chi_N$ lie in $\Bpi N$.





If $\chi \in \irr G$, Isaacs constructs the subnormal vertex as
follows.  Let $M$ be maximal so that $M$ is subnormal in $G$ and the
irreducible constituents of $\chi_M$ are factored.  Let $\mu$ be an
irreducible constituent of $\chi_M$ and let $T$ be the stabilizer of
$(M,\mu)$ in $G$.  Isaacs proved in \cite{pisep} that there is a
Clifford theorem for $T$.  In other words, there is a unique
character $\tau \in \irr {T \mid \mu}$ so that $\tau^G = \chi$.  He
also proved that $(M,\mu)$ is unique up to conjugacy, and so,
$(T,\tau)$ is unique up to conjugacy.  If $T = G$, then $\chi$ is
$\pi$-factored and we take $(G,\chi)$ to be the subnormal nucleus of
$\chi$.  If $T < G$, then inductively, the subnormal nucleus for
$\tau$ is the subnormal nucleus for $\chi$.  We write $(W,\gamma)$
for the subnormal nucleus of $\chi$, and Isaacs showed that
$\gamma^G = \chi$, $\gamma$ is factored, and $(W,\gamma)$ is unique
up to conjugacy.  A character $\chi \in \irr G$ is in $\Bpi G$ if
and only if the character of its nucleus is $\pi$-special.

If $Q$ is a $\pi'$-subgroup of $G$, then we use $\Ipi {G \mid Q}$ to
denote the $\pi$-partial characters in $\Ipi G$ that have vertex
$Q$.  If $\phi \in \Ipi G$ and $V \le G$, then we write $\IIpi
{\phi}VQ = \{ \eta \in \Ipi {V \mid Q} \mid \eta^G = \phi \}$.  We
now find details about properties of a minimal counterexample.  We
will see that a counterexample cannot occur when either $|G|$ is odd
or $2$ is not in $\pi$.  Our goal is find enough information so that
we can either find a contradiction or build an example when $|G|$ is
even and $2 \in \pi$.

\begin{theorem} \label{min counter}
Let $G$ be a solvable group.  Assume $\phi \in \Ipi G$ has vertex
$Q$, let $V$ be a subgroup of $G$, and let $N$ be the core of $V$ in
$G$. If $G$ and $V$ are chosen so that $|G| + |G:V|$ is minimal
subject to the condition that $|\IIpi {\phi}VQ| > |\norm GQ:\norm
VQ|$, then the following are true:

\begin{enumerate}
\item $V$ is a nonnormal maximal subgroup of $G$,
\item $|G:V|$ is a power of $2$,
\item $2 \in \pi$,
\item $Q \le V$,
\item $\phi_N = a \alpha$ for some $\alpha \in \ibr N$,
\item $\alpha (1)$ is a $\pi$-number,
\item if $K$ is normal in $G$ so that $K/N$ is a chief factor for
$G$, then $\alpha$ is fully ramified with respect to $K/N$.
\end{enumerate}
\end{theorem}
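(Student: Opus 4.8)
The plan is to argue by a minimal counterexample and induct on $|G| + |G:V|$, establishing the conclusions in the order 4, 1, then the structural statements. Conclusion 4 is immediate: in a counterexample $|\IIpi{\phi}{V}{Q}|$ strictly exceeds the nonnegative integer $|\norm{G}{Q}:\norm{V}{Q}|$, so $\IIpi{\phi}{V}{Q}$ is nonempty; any $\eta$ in it has vertex $Q$, and a vertex of a $\pi$-partial character of $V$ is a $\pi'$-subgroup of $V$, whence $Q \le V$. For conclusion 1, note first that $V \ne G$, since $V = G$ forces $\eta = \phi$ and $|\IIpi{\phi}{G}{Q}| = 1 = |\norm{G}{Q}:\norm{G}{Q}|$. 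To see $V$ is maximal, suppose $V < W < G$. For $\eta \in \IIpi{\phi}{V}{Q}$, transitivity gives $(\eta^W)^G = \phi$; since $\phi$ is a single irreducible $\pi$-partial character and induction is nonnegative on the basis $\Ipi{}$, the character $\eta^W$ must be a single $\psi \in \Ipi{W}$ with $\psi^G = \phi$, and $\psi$ has vertex $Q$ because $\eta^W$ is induced from the same $\pi$-degree character that realises the vertex $Q$ of $\eta$. Thus $\eta \mapsto \eta^W$ maps $\IIpi{\phi}{V}{Q}$ into $\IIpi{\phi}{W}{Q}$ with fibre over $\psi$ equal to $\IIpi{\psi}{V}{Q}$, giving $|\IIpi{\phi}{V}{Q}| = \sum_{\psi \in \IIpi{\phi}{W}{Q}} |\IIpi{\psi}{V}{Q}|$. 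As $(W,V)$ and $(G,W)$ both have strictly smaller $|\cdot| + |\cdot{:}\cdot|$, minimality bounds each factor and yields $|\IIpi{\phi}{V}{Q}| \le |\norm{G}{Q}:\norm{W}{Q}|\,|\norm{W}{Q}:\norm{V}{Q}| = |\norm{G}{Q}:\norm{V}{Q}|$, a contradiction; hence $V$ is maximal.

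Next I would show $V$ is nonnormal. If $V \trianglelefteq G$ then $N = V$, and since $\eta^G = \phi$ is irreducible every $\eta \in \IIpi{\phi}{V}{Q}$ has stabiliser $G_\eta = V$, so $\IIpi{\phi}{V}{Q}$ is precisely the set of constituents of $\phi_V$ of vertex $Q$; applying Corollary \ref{cliff count} with $N = V$ (where each Clifford correspondent $\phi_\alpha$ equals $\alpha$) gives $|\IIpi{\phi}{V}{Q}| = |\norm{G}{Q}:\norm{V}{Q}|$, ruling out a strict inequality. Thus $V$ is a nonnormal maximal subgroup, and the structure theory of solvable primitive groups applies to $G/N$, which acts primitively with point stabiliser $V/N$: it has a unique minimal normal subgroup $K/N$, an elementary abelian $r$-group for a single prime $r$, complemented by $V/N$ so that $G = KV$, $K \cap V = N$, and $|G:V| = |K:N| = r^d$. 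This $K$ is the chief factor of conclusion 7 and already shows $|G:V|$ is a prime power; the task is to force $r = 2$.

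The next step is homogeneity (conclusion 5). Suppose $\phi_N$ is not homogeneous and let $\alpha$ be a constituent with $T := G_\alpha < G$, chosen via Corollary \ref{cliff count} so that the Clifford correspondent $\phi_\alpha \in \Ipi{T}$ has vertex $Q$. With $V_T := V \cap T$ one has $|T:V_T| = |TV:V| \le |G:V|$ and $|T| < |G|$, so $|T| + |T:V_T| < |G| + |G:V|$. The Clifford correspondence of \cite{Fong}, applied at the levels of both $G$ and $V$ and combined with Lemma \ref{cliff}, sets up a vertex-preserving bijection between the members of $\IIpi{\phi}{V}{Q}$ lying over the $V$-orbit of $\alpha$ and $\IIpi{\phi_\alpha}{V_T}{Q}$; feeding the minimality bound for $(T,V_T)$ together with Corollary \ref{cliff count} (to account for the full orbit of constituents) into the normaliser indices contradicts the strict inequality. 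Hence $\phi_N = a\alpha$ with $\alpha \in \Ipi{N}$ $G$-invariant. Conclusion 6 is handled in the same spirit, using the canonical lifts $\Bpi{G}$ and the subnormal nucleus developed above: were $\alpha(1)$ divisible by a prime in $\pi'$, one could split off that part and descend to a smaller instance, so minimality makes $\alpha(1)$ a $\pi$-number.

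It remains to analyse $\phi$ over the chief factor $K/N$. Since $\alpha$ is $G$-invariant (hence $K$-invariant) of $\pi$-degree and $K/N$ is an abelian chief factor, the $\pi$-separable Clifford theory gives the dichotomy that $\alpha$ either extends to $K$ or is fully ramified with respect to $K/N$. If $\alpha$ extended, Gallagher's correspondence would parametrise the constituents of $\phi_K$ by $\Ipi{K/N}$, on which $V/N$ acts irreducibly just as on $K/N$; selecting a non-$G$-invariant constituent and repeating the Clifford reduction of the previous paragraph (or computing the count directly) would contradict minimality. Hence $\alpha$ is fully ramified, which is conclusion 7, and $|K:N| = r^d = e^2$ is a perfect square. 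Pinning down $r = 2$ and $2 \in \pi$ (conclusions 2 and 3) is the main obstacle: the fully ramified section equips $K/N$ with a nondegenerate $V/N$-invariant alternating form, and the discrepancy between $|\IIpi{\phi}{V}{Q}|$ and $|\norm{G}{Q}:\norm{V}{Q}|$ is governed by a power of $2$ coming from this symplectic structure together with the $\pi'$-nature of the vertex $Q$. A surviving strict inequality requires that $2$-power to be nontrivial, which can occur only when the chief factor is a $2$-group—so $r = 2$ and $|G:V|$ is a power of $2$—and only when $2 \in \pi$; carrying out this symplectic-and-vertex count carefully is the crux of the argument.
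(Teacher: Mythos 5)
Your reductions for conclusions 4, 1, and 5 track the paper's proof almost exactly: nonemptiness of $\IIpi{\phi}{V}{Q}$ forces $Q \le V$; the factorization $|\norm GQ:\norm WQ|\,|\norm WQ:\norm VQ|$ together with minimality handles maximality; Corollary \ref{cliff count} disposes of normal $V$; and the Clifford-correspondent count over the core $N$, again via Lemma \ref{cliff} and minimality, gives homogeneity of $\phi_N$. The dichotomy ``extends or fully ramified'' over the chief factor $K/N$ is also the paper's. But there are three genuine gaps, and they sit exactly where the content of the theorem lies. First, conclusion 6 is waved at: ``were $\alpha(1)$ divisible by a prime in $\pi'$, one could split off that part and descend'' is not an argument --- a partial character does not factor this way, and you exhibit no smaller instance. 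The paper proves 6 by lifting $\alpha$ to $\alpha^* \in \Bpi N$, forming the subnormal nucleus $(W,\hat\alpha)$ of $\alpha^*$ with stabilizer $T$, and invoking Lemma 2.3 of \cite{Laradji} to get unique vertex-$Q$ characters over $\hat\alpha$ inducing $\phi$ and each $\eta$, whence $|\IIpi{\phi}{V}{Q}| = |\IIpi{\hat\phi}{T \cap V}{Q}|$; if $T < G$, minimality plus the diamond lemma contradicts the hypothesis, so $T = G$, forcing $W = N$ and $\hat\alpha = \alpha^*$ to be $\pi$-special, so that $\alpha(1)$ is a $\pi$-number. This step cannot be skipped, because the $\pi$-speciality of $\alpha^*$ is what powers both of the later steps.

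Second, your plan for ruling out the extension branch of conclusion 7 --- ``repeating the Clifford reduction'' at the level of $K$ --- cannot work as stated, because $K \not\le V$ (indeed $V \cap K = N$): the members of $\IIpi{\phi}{V}{Q}$ are characters of $V$ and have no restriction to $K$, so there is no Clifford-correspondent bijection over constituents of $\phi_K$ to which minimality could be applied. The paper's mechanism is different: Glauberman's lemma produces a $V$-invariant extension $\delta$ of $\alpha^*$ to $K$, Corollary 4.2 of \cite{pisep} makes restriction a bijection from $\irr{G \mid \delta}$ onto $\irr{V \mid \alpha^*}$, and then the lift $\eta^* \in \Bpi V$ of any $\eta \in \IIpi{\phi}{V}{Q}$ would both extend to $G$ and induce irreducibly to $G$, which is absurd. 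Third, and most seriously, conclusions 2 and 3 are not proved at all --- you say yourself this is ``the crux.'' The paper derives them from two specific external theorems: if the chief-factor prime $p$ were not in $\pi$, then Corollary 6.28 of \cite{text} (applicable precisely because $\alpha^*$ is $\pi$-special, the step you skipped) would make $\alpha^*$ extend to $K$, contradicting full ramification, so $p \in \pi$; and if $p$ were odd, the main theorem of \cite{fram} (strengthened in \cite{brown}) shows that across a fully ramified section of odd order no character of $\irr{V \mid \alpha}$ induces irreducibly to $G$, so $\IIpi{\phi}{V}{Q}$ would be empty, a contradiction --- hence $p = 2$, giving $2 \in \pi$ and $|G:V| = |K:N|$ a power of $2$. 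Your symplectic-form heuristic is indeed the mechanism underlying \cite{fram}, but no ``$2$-power discrepancy'' count is performed or even formulated; what the odd-order theory actually yields is emptiness of $\IIpi{\phi}{V}{Q}$, not a bound on its size, and without these two inputs conclusions 2, 3, and 6 remain unestablished.
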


\begin{proof}
If either $V = G$ or $\IIpi {\phi}VQ$ is empty, then $|\IIpi
{\phi}VQ| \le |\norm GQ:\norm VQ|$ contradicting the hypotheses.
Thus, $V < G$ and $\IIpi {\phi}VQ$ is not empty, and so, $Q \le V$
and there exist characters in $\Ipi V$ that induce $\phi$ and have
vertex $Q$.

We begin by showing that $V$ is a maximal subgroup.  Suppose that $V
< M < G$ for some subgroup $M$.  Let $\IIpi {\phi}MQ = \{ \eta_1,
\dots, \eta_m \}$.  Using minimality, we have $m = |\IIpi {\phi}MQ|
\le |\norm GQ:\norm MQ|$. Suppose that $\zeta \in \IIpi {\phi}VQ$,
then $\zeta^M \in \Ipi M$ and $\zeta^M$ has $Q$ as a vertex.  Since
$(\zeta^M)^G = \zeta^G = \phi$, we see that $\zeta^M \in \IIpi
{\phi}MQ$.  It follows that $\zeta^M = \eta_i$ for some $i$.  We
conclude that $|\IIpi {\phi}VQ| \le \sum_{i=1}^m |\IIpi
{\eta_i}VQ|$.  Since this contradicts our hypothesis, we obtain
$|\IIpi {\eta_i}VQ| \le |\norm MQ:\norm VQ|$.  We deduce that
$$
|\IIpi {\phi}VQ| \le m|\norm MQ:\norm VQ| \le
|\norm GQ:\norm MQ||\norm MQ: \norm VQ| = |\norm GQ:\norm VQ|.
$$
Since this violates the hypotheses, $V$ is maximal in $G$.

If $V$ is normal in $G$, then either $\phi$ is induced from $V$ or
$\phi$ restricts irreducibly to $V$.  If $\phi$ is induced from $V$,
then we can apply Corollary \ref{cliff count} to see that $|\IIpi
{\phi}VQ| \le |\norm GQ:\norm VQ|$ in violation of the hypotheses.
If $\phi$ restricts irreducibly, then it cannot be induced from $V$,
and we have seen that this is also a contradiction.  We conclude
that $V$ is not normal in $G$.

Suppose $\alpha \in \Ipi N$ is a constituent of $\phi_N$.  We use
$\phi_\alpha \in \Ipi {G_\alpha \mid \alpha}$ to denote the Clifford
correspondent for $\phi$ with respect to $\alpha$ (see Proposition
3.2 of \cite{Fong} again).   Write $\{ \alpha \in \Ipi N \mid
\phi_\alpha {\rm ~has~vertex~} Q \} = \{ \alpha_1, \dots, \alpha_k
\}$, and let $\phi_i = \phi_{\alpha_i}$ and $G_i = G_{\alpha_i}$. By
Lemma \ref{cliff count}, we know that $k = |\norm GQ:\norm {G_i}Q|$.

Suppose $\eta \in \IIpi {\phi}VQ$.  Denote $\{ \beta \in \Ipi N \mid
\eta_\beta {\rm ~has~vertex~} Q \} = \{ \beta_1, \dots, \beta_l \}$,
and let $\eta_j = \eta_{\beta_j}$ and $V_j = V_{\beta_j}$.  By Lemma
\ref{cliff count}, $l = |\norm VQ: \norm {V_i}Q|$.

We see that $(\eta_j)^G = ((\eta_j)^V)^G = \eta^G = \phi$.  This
implies that $(\eta_j)^{G_{\beta_j}}$ is irreducible and has vertex
$Q$.  It follows that $\beta_j = \alpha_{i_j}$ for some $i_j$.  We
obtain $G_{\beta_j} = G_{i_j}$ and $(\beta_j)^{G_{i_j}} =
\alpha_{i_j}$. Observe that $V_j = G_{i_j} \cap V$, and we denote
this subgroup by $V^*_{i_j}$.

Now, we assume that $k > 1$, and we start to count.  We see that
$\eta \in \IIpi {\phi}GQ$ is induced by $|\norm VQ:\norm
{V^*_{i_j}}Q|$ partial characters in $\bigcup \IIpi
{\phi_i}{V^*_i}Q$.  Because $G_i < G$, we may use minimality of $|G|
+ |G:V|$ to deduce $|\IIpi {\phi_i}{V^*_i}Q| \le |\norm {G_i}Q:\norm
{V^*_i}Q|$. We compute
$$
|\IIpi {\phi}VQ| = \sum_{i=1}^k \frac 1{|\norm VQ:\norm {V^*_i}Q|}
|\IIpi {\phi_i}{V^*_i}Q| \le \sum_{i=1}^k \frac 1{|\norm VQ:\norm
{V^*_i}Q|} |\norm {G_i}Q:\norm {V^*_i}Q|.
$$
We determine that
$$
\frac 1{|\norm VQ:\norm {V^*_i}Q|} |\norm {G_i}Q:\norm {V^*_i}Q| =
\frac {|\norm {G_i}Q|}{|\norm VQ|},
$$
for each $i$.   Notice that
$|\norm {G_i}Q| = |\norm {G_1}Q$ for all $i$ and $k = |\norm GQ:
\norm {G_1} Q$. This yields
$$
|\IIpi {\phi}VQ| \le \sum_{i=1}k \frac {|\norm {G_1}Q}{\norm VQ} =
\frac {|\norm GQ:\norm {G_1}Q| |\norm {G_1}Q|}{|\norm VQ|} = |\norm
GQ:\norm VQ|.
$$
This contradicts the hypothesis.  We deduce that $k = 1$, and
$\alpha$ is invariant in $G$.

Set $\alpha = \alpha_1$, and let $\alpha^*$ be the character in
$\Bpi N$ satisfying $(\alpha^*)^o = \alpha$.  Write $(W,\hat\alpha)$
for the nucleus of $\alpha^*$, and take $T$ to be the stabilizer of
$(W,\hat\alpha)$ in $G$. By Lemma 2.3 of \cite{Laradji}, there is a
unique character $\hat\phi \in \IIpi {}T{\hat\alpha}$ so that
$\hat\phi^G = \phi$ and $Q$ is a vertex for $\hat\phi$.  Similarly,
if $\eta \in \IIpi {\phi}VQ$, then there is a unique character
$\hat\eta \in \IIpi {}{T \cap V}{\hat\alpha}$ so that $\hat\eta^V =
\eta$ and $Q$ is a vertex for $\hat\eta$. Observe that $\hat\eta^T
\in \IIpi {}T{\hat\alpha}$ and induces $\phi$, so $\hat\eta^T =
\hat\phi$.  It follows that $|\IIpi {\phi}VQ| = |\IIpi {\hat\phi}{T
\cap V}Q|$.  If $T < G$, then we can use the minimality of $|G| +
|G:V|$ to see that $|\IIpi {\hat\phi}{T \cap V}Q| \le |\norm TQ:
\norm {V \cap T}Q|$.  By the diamond lemma, we have $|\norm TQ:
\norm {V \cap T}Q| = |\norm TQ: V \cap \norm TQ| \le |\norm GQ :
\norm VQ|$.  This contradicts the hypotheses, and so $T = G$.

We now have that $(W,\hat\alpha)$ is $G$-invariant.  By the
construction of the subnormal, this implies that $W = N$.  Since
$\alpha^* \in \Bpi N$, the nucleus for $\alpha^*$ has a character
that is $\pi$-special.  Thus, $\hat\alpha$ is $\pi$-special, and
since $W = N$, we see that $\hat\alpha = \alpha^*$.  In particular,
$\hat\alpha$ is $\pi$-special.  We deduce that $\alpha (1)$ is a
$\pi$-number.

Take $K$ normal in $G$ so that $K/N$ is a chief factor for $G$. This
is the point where we use the fact that $G$ is solvable to see that
$G = VK$ and $V \cap K = N$ where $K/N$ is an elementary abelian
$p$-group for some prime $p$.  (This is the only place we use the
hypothesis that $G$ is solvable in place of $G$ being
$\pi$-separable.)
%
%
Let $L/K$ be a chief factor for $G$.  We know that $(|L:K|,|K:N|) =
1$ and $\cent {L \cap V/N}{K/N}$.  (See Lemma 5.1 of \cite{max} for
a proof of this.)  By Problem 6.12 of \cite{text}, either $\alpha^*$
extends to $K$ or $\alpha^*$ is fully-ramified with respect to
$K/N$.  .

Suppose first that $\alpha^*$ extends to $K$. Notice that
multiplication by $\irr {K/L}$ is a transitive action on the
irreducible constituents of $(\alpha^*)^K$.  Also, $(V \cap K)/L$
acts on compatibly on the irreducible constituents of $(\alpha^*)^K$
and on $\irr {K/L}$ where the action on $\irr {K/L}$ is coprime.  We
can use Glauberman's lemma (Lemma 13.8 of \cite{text}) to see that
$\alpha^*$ has a $V \cap L$-invariant extension.  The corollary to
Glauberman's lemma (Corollary 13.9 of \cite{text}) can be applied to
see that $\alpha^*$ has a unique $V \cap L$-invariant extension
$\delta$. Since $V$ permutes the $V \cap L$-extensions of
$\alpha^*$, it follows that $\delta$ is $V$-invariant.  We now use
Corollary 4.2 of \cite{pisep} to see that restriction is a bijection
from $\irr {G \mid \beta}$ to $\irr {V \mid \alpha^*}$.

Let $\eta \in \IIpi {\phi}VQ$ so that $\eta^G = \phi$.  We can find
$\eta^* \in \Bpi V$ so that $(\eta^*)^o = \eta$. Since
$({\eta^*}^G)^o = ({\eta^*}^o)^G = \eta^G = \phi \in \ibr G$, we see
that $\eta^G$ is irreducible.  On the other hand, $({\eta^*}^o)_N =
(\eta_N)^o = b \alpha$ for some integer $b$.  Since the irreducible
constituents of ${\eta^*}_N$ lie in $\Bpi N$, we deduce that $\eta^*
\in \irr {V \mid \alpha^*}$.  But we saw that this implies that
$\eta^*$ extends to $G$.  Since $V < G$, it is not possible for
$\eta^*$ to both extend to $G$ and induce irreducibly.  Therefore,
we have a contradiction.  We see that $\alpha^*$ (and hence,
$\alpha$) is fully ramified with respect to $K/N$.  Notice that if
$p$ is not in $\pi$, then Corollary 6.28 of \cite{text} applies and
$\alpha^*$ extends to $K$. Therefore, $p \in \pi$.

We suppose that $p$ is odd, and we work for a contradiction.  Since
$\alpha^*$ is fully-ramified with respect to $K/N$ and $|K:N|$ has
odd order, main theorem of \cite{fram} implies that no character in
$\irr {V \mid \alpha}$ induces irreducibly to $G$.  (A stronger
theorem is proved in \cite{brown}.) As in the previous paragraph,
this implies that $\phi$ is not induced from $V$ which contradicts
the assumption that $\IIpi {\phi}VQ$ is not empty. (This strongly
uses the fact that $p$ is odd. When $p = 2$, it is tempting to try
use the correspondence in \cite{strong}, but that correspondence
does not preclude inducing characters in $\irr {G \mid \alpha}$ from
$V$. In fact, ${\rm GL}_2 (3)$ is an example where this occurs.) We
conclude that $p = 2$. Since $|G:V| = |K:N|$, we see that $|G:V|$ is
a power of $2$. This proves the theorem.
\end{proof}

As a corollary, we obtain Theorem \ref{main} stated for
$\pi$-partial characters.

\begin{corollary}
Let $G$ be a solvable group.  Assume either $|G|$ is odd or $2
\not\in \pi$.  Let $Q$ be a $\pi'$-subgroup of $G$ and suppose that
$Q \le V$.  If $\phi \in \Ipi G$, then $|\IIpi {\phi}VQ| \le |\norm
GQ:\norm VQ|$.
\end{corollary}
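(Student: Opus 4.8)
The plan is to deduce this corollary directly from Theorem \ref{min counter} by a minimal counterexample argument. First I would dispose of a trivial case: if $\IIpi {\phi}VQ$ is empty then the asserted inequality holds vacuously, so I may assume there is some $\eta \in \IIpi {\phi}VQ$. Since a vertex of $\eta$ is a vertex of $\eta^G = \phi$, the subgroup $Q$ is then a vertex for $\phi$; this supplies the one hypothesis of Theorem \ref{min counter} that the corollary does not state explicitly.

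Now suppose the corollary is false, and choose a counterexample $(G,V,\phi,Q)$ satisfying the hypothesis ``$|G|$ odd or $2 \not\in \pi$'' with $|G| + |G:V|$ minimal among all such counterexamples. The key observation is that this hypothesis is inherited by every smaller configuration arising in the proof of Theorem \ref{min counter}: each reduction there passes either to a proper subgroup of $G$ (an intermediate $M$ in the maximality step, or a stabilizer such as $G_i$ or $T$) or to a larger intermediate subgroup over the same ambient group $G$. A subgroup of an odd-order group again has odd order, and the condition $2 \not\in \pi$ refers only to the fixed set $\pi$; hence every smaller instance again lies in the hypothesis class. Consequently the minimality I have imposed is exactly the minimality that the proof of Theorem \ref{min counter} invokes, so conclusions (1)--(3) of that theorem apply to $(G,V,\phi,Q)$.

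It then remains only to extract a contradiction. Conclusion (3) asserts $2 \in \pi$, which is incompatible with the alternative $2 \not\in \pi$. In the remaining case $|G|$ is odd; but conclusion (1) gives $V < G$, so $|G:V| > 1$, while conclusion (2) makes $|G:V|$ a power of $2$, and since $|G:V|$ divides $|G|$ this forces $2 \mid |G|$, contradicting odd order. Either branch is contradictory, so no counterexample exists, and the corollary follows.

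I expect the one point requiring care to be the middle step: confirming that the hypothesis ``$|G|$ odd or $2 \not\in \pi$'' really is preserved under each of the reductions in Theorem \ref{min counter}, so that the theorem may legitimately be applied to a counterexample chosen minimal within the hypothesis class rather than minimal among all counterexamples whatsoever. Once this is secured, the contradiction is immediate from conclusions (2) and (3), and no further computation is needed.
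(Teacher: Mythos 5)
Your proof is correct and takes essentially the same approach as the paper: choose a counterexample minimal with respect to $|G| + |G:V|$, apply Theorem \ref{min counter}, and derive the contradiction from conclusions (2) and (3). You are in fact more careful than the paper's two-line proof, which silently passes over both points you flag --- deducing that $Q$ is a vertex of $\phi$ from nonemptiness of $\IIpi {\phi}VQ$, and verifying that the hypothesis class ``$|G|$ odd or $2 \not\in \pi$'' is closed under the reductions used in Theorem \ref{min counter}, so that a counterexample minimal within that class legitimately satisfies the theorem's minimality hypothesis.
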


\begin{proof}
We suppose the result is not true.  Let $G$ be a counterexample with
$|G| + |G:V|$ as in Theorem \ref{min counter}.  By that result, we
have that $|G:V|$ is a nontrivial power of $2$ which is a
contradiction if $|G|$ is odd.  We also have $2 \in \pi$ which is a
contradiction to $2 \not\in \pi$.  This proves the corollary.
\end{proof}

\end{document}